\documentclass[bj]{imsart}
\usepackage{bbm}
\usepackage{hyperref}
\usepackage{amsfonts}
\usepackage{algorithm}
\usepackage{algorithmic}
\usepackage{latexsym, amssymb, amsmath, amscd, amsthm, amsxtra}
\usepackage{mathtools}
\usepackage{enumerate}
\usepackage[all]{xy}
\usepackage{mathrsfs}
\usepackage{fancyhdr}
\usepackage{listings}
\usepackage{hyperref}
\usepackage{enumitem}
\theoremstyle{plain}
\newtheorem{thm}{Theorem}
\newtheorem{defn}{Definition}
\newtheorem{prop}[thm]{Proposition}

\newtheorem{rem}[thm]{Remark}

\newcommand{\Eb}{\mathbb{E}}
\newcommand{\Pb}{\mathbb{P}}
\begin{document}

\begin{frontmatter}
\title{Percolation threshold for metric graph loop soup}
\runtitle{Percolation threshold for metric graph loop soup}
\begin{aug}
\author[A]{\inits{Y.}\fnms{Yinshan}~\snm{Chang}\ead[label=e1]{ychang@scu.edu.cn}}
\author[B]{\inits{H.}\fnms{Hang}~\snm{Du}\ead[label=e2]{duhang@pku.edu.cn}}
\author[C]{\inits{X.}\fnms{Xinyi}~\snm{Li}\ead[label=e3]{xinyili@bicmr.pku.edu.cn}}

\address[A]{College of Mathematics, Sichuan University\printead[presep={,\ }]{e1}}

\address[B]{School of Mathematical Sciences, Peking University\printead[presep={,\ }]{e2}}

\address[C]{Beijing International Center for Mathematical Research, Peking University\printead[presep={,\ }]{e3}}
\end{aug}

\begin{abstract}
In this short note, we show that the critical threshold for the percolation of metric graph loop soup on a large class of transient metric graphs (including quasi-transitive graphs such as $\mathbb{Z}^d$, $d\geq 3$) is $1/2$.
\end{abstract}

\begin{keyword}[class=MSC2020]
	\kwd{60K35}
	\kwd{82B43}
\end{keyword}

\begin{keyword}
\kwd{Percolation threshold}
\kwd{loop soup}
\kwd{metric graph}
\end{keyword}\end{frontmatter}

\section{Introduction and the main result}
The model of Brownian loop soup was first introduced by Symanzik (see e.g.\ \cite{Symanzik}) and revived in \cite{LawWer04} as a Poissonian collection of loops whose law is based on that of the Brownian motion. Its random walk analogue, the random walk loop soup, was introduced in \cite{LawTru07}. Loop soups are intimately related to various objects of interest in probability and statistical physics, in particular via the isomorphism theorem in \cite{LeJ11} linking the loop soup of intensity\footnote{Note that in the early literature there is an inconsistency of a multiplicative factor of $2$ in the intensity parameter from the definition of loop soups; see e.g.\ \cite{Lupu16-2} for a detailed discussion on this issue.} $1/2$ to the Gaussian free field. 

The percolation of loop soups was already considered in \cite{LawWer04} and then  in \cite{SheWer12} under the setting of the two-dimensional Brownian loop soup. The latter paper, among other results, identified the value of the critical intensity as $1/2$. Subsequently, the works \cite{LJL13}, \cite{ChS16}, and \cite{Chang17} considered  percolation for random walk loop soups on $\mathbb{Z}^d$ for $d\geq 3$ and established various results regarding the phase transition in percolative properties. 

In \cite{Lupu16}, Lupu  considered the loop soup on so-called ``metric graphs'' (also referred to as the ``cable system'' or ``cable graphs''), a notion that corresponds to the extension of discrete graphs to a continuous metric space in which each edge of the graph have a ``length'' and Markov chains are embedded in Brownian motions moving continuously along edges. This particular model (referred to in this note as the {\it metric graph loop soup}) interpolates between the discrete and the continuum, on which the power of the isomorphism theorem is maximized, yielding exact formulas (in particular the two-point function from \cite{Lupu16}; see Proposition~ \ref{prop-two-point-function} below for more details) allowing Lupu to conclude that the critical threshold for the percolation of both the random walk loop soup and metric graph loop soup is greater or equal to $1/2$ on 
\begin{itemize}
\item the integer lattice $\mathbb{Z}^d$, $d\geq3$,
\item $\mathbb{Z}^2$ with constant killing, and 
\item the upper half plane $\mathbb{Z} \times \mathbb{N}$. (Lupu also showed that the threshold is indeed $1/2$ in this case in a separate work \cite{Lupu16-2}).
\end{itemize}
 The work \cite{DPR22} extended Lupu's results to a much wider class of transient metric graphs (including non-amenable graphs such as regular trees). 
 
 It remained an open question whether the threshold is exactly equal to $1/2$ for metric graph loop soup on various types of graphs. (In contrast, the critical threshold for the discrete loop percolation does not equal to $1/2$ in general, see e.g.\ \cite[Theorem~1.3]{ChS16} where it is shown that the threshold on $\mathbb{Z}^d$ tends to infinity as $d\to \infty$.)

In this short note, we give a positive answer to this question on a sufficiently general class of metric graphs by a simple application of the Russo's formula and the two-point function discovered in \cite{Lupu16}. 

We now state our main result. Given a metric graph ${\cal G}$, we denote by $\Pb_\alpha$ for the law of the metric graph loop soup on ${\cal G}$ with intensity $\alpha>0$ and by $x_o\longleftrightarrow \infty$ the event that $x_o\in {\cal G}$ is in an unbounded\footnote{w.r.t.\ the graph distance $\operatorname{d}(\cdot,\cdot)$; see Definition \ref{def:mg}.} cluster formed by the loop soup.
\begin{thm}\label{thm:main}
	For any quasi-transitive transient metric graph ${\cal G}$  and any $x_o\in {\cal G}$, it holds that 
	\begin{equation}
		\alpha>1/2 \, \Longrightarrow \Pb_\alpha[x_o\longleftrightarrow \infty]>0.
	\end{equation}
\end{thm}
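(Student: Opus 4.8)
The plan is to deduce the statement from a differential inequality in the intensity parameter, obtained via Russo's formula, following the Duminil-Copin--Tassion strategy for sharpness of phase transitions; the single model-specific ingredient will be Lupu's two-point function (Proposition~\ref{prop-two-point-function}). Write $B_N$ for the ball of radius $N$ around $x_o$ with respect to $\operatorname{d}(\cdot,\cdot)$ and $\partial B_N$ for its boundary sphere, and set $\theta_N(\alpha):=\Pb_\alpha[x_o\longleftrightarrow \partial B_N]$. Since the events $\{x_o\longleftrightarrow \partial B_N\}$ decrease to $\{x_o\longleftrightarrow \infty\}$ as $N\to\infty$ (a connected subset of the locally compact metric graph meeting every sphere is unbounded), it suffices to show that, for $\alpha>1/2$, $\theta_N(\alpha)$ is bounded below by a positive constant uniformly in $N$.

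The metric graph loop soup of intensity $\alpha$ is a Poisson point process with intensity $\alpha\,\mu$, where $\mu$ is the loop measure on $\mathcal G$ (a well-defined $\sigma$-finite measure, using transience). As $\{x_o\longleftrightarrow \partial B_N\}$ is an increasing event, Russo's formula gives
\[
\frac{\mathrm d}{\mathrm d\alpha}\,\theta_N(\alpha)\;=\;\Eb_\alpha\!\big[\,\mu\big(\{\gamma\ \text{pivotal for}\ \{x_o\longleftrightarrow \partial B_N\}\}\big)\,\big],
\]
where $\gamma$ is pivotal if, in the current configuration, $x_o\not\longleftrightarrow \partial B_N$ but adjoining $\gamma$ creates the connection (the relevant loops must bridge the cluster of $x_o$ to that of $\partial B_N$, so the $\mu$-integral converges despite $\mu$ being infinite). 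Exploring the cluster $\mathcal C$ of $x_o$ loop by loop and invoking the restriction property of the loop soup --- the loops not met during the exploration form an independent soup --- one bounds the right-hand side from below, exactly as in Duminil-Copin--Tassion, by
\[
c_\alpha\,\big(\inf\nolimits_{S}\varphi_\alpha(S)\big)\,\big(1-\theta_N(\alpha)\big),
\]
where $c_\alpha>0$ is bounded away from $0$ on compact $\alpha$-intervals, the infimum is over all finite connected $S\subseteq\mathcal G$ with $x_o\in S$, and $\varphi_\alpha(S)$ is the natural loop-soup analogue of the Duminil-Copin--Tassion boundary functional: the expected $\mu$-mass of loops that exit $S$ and meet the cluster of $x_o$ inside $S$. (Quasi-transitivity enters here, to run this scheme uniformly along the exhaustion by balls and in the bookkeeping below.)

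The heart of the argument is the claim that $\varphi_{1/2}(S)\ge 1$ for every such $S$. At intensity $1/2$, Lupu's isomorphism identifies the clusters of the loop soup inside $S$ with the sign clusters of the metric-graph Gaussian free field on $S$ with Dirichlet boundary conditions; combined with standard formulas for the $\mu$-mass of loops meeting a given compact set in terms of Green's functions, $\varphi_{1/2}(S)$ is rewritten as a boundary sum of two-point functions, to which the explicit expression of Proposition~\ref{prop-two-point-function} applies. A resummation over the boundary of $S$ --- using the defining identity for the Green's function and the elementary bound $\arcsin t\ge t$ on $[0,1]$ --- then yields $\varphi_{1/2}(S)\ge 1$. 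Since $\varphi_\alpha(S)$ is non-decreasing in $\alpha$, it follows that $\varphi_\alpha(S)\ge 1$ for all $\alpha\ge 1/2$ and all $S$.

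Plugging this into the displayed lower bound, for every $\beta\in[1/2,\alpha]$ we get $\theta_N'(\beta)\ge c_\beta\,(1-\theta_N(\beta))$, that is $\frac{\mathrm d}{\mathrm d\beta}\log\frac1{1-\theta_N(\beta)}\ge c_\beta$; integrating from $1/2$ to $\alpha$ and using $\theta_N(1/2)\ge 0$ gives $1-\theta_N(\alpha)\le \exp\!\big(-\int_{1/2}^{\alpha}c_\beta\,\mathrm d\beta\big)<1$ uniformly in $N$, hence $\Pb_\alpha[x_o\longleftrightarrow \infty]=\lim_N\theta_N(\alpha)>0$. I expect the main obstacle to be this heart of the argument: identifying $\varphi_{1/2}(S)$ precisely enough in Gaussian-free-field terms and verifying the sharp bound $\varphi_{1/2}(S)\ge 1$, since it is exactly this constant that pins the threshold at $1/2$ and it must dovetail with the already-known reverse inequality $\alpha_c\ge 1/2$. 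A secondary difficulty is making Russo's formula and the cluster exploration fully rigorous in the continuum metric-graph setting, where a single loop is an extended object capable of gluing together far-apart clusters.
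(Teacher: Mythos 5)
Your overall strategy---a Russo/Duminil-Copin--Tassion differential inequality anchored at intensity $1/2$ through Lupu's two-point function---is indeed the strategy of the paper, and your final integration step is sound. But the decisive step, the uniform bound $\varphi_{1/2}(S)\ge 1$, is asserted rather than proved ("a resummation \dots then yields"), and this bound \emph{is} the theorem: everything else is soft. The mechanism the paper uses is concrete and you should be able to name it: after applying $\arcsin t\ge t$ and $G_{K}(x_o,x_o)\le G(x_o,x_o)$, one writes
$$\frac{G_{K}(x_o,x)}{G_{K}(x,x)}=Q_{x_o}\big[(B^{\cal G}_t)_{t\ge0}\text{ hits }x\text{ before exiting }K\big],$$
and the sum of these hitting probabilities over the inner vertex boundary of $K$ is at least $1$ simply because the Brownian motion started at $x_o$ must hit some such vertex before leaving $K$. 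No isomorphism with the GFF is needed at this stage; only formula \eqref{eq-two-point-function}. Your sketch, which routes through "the expected $\mu$-mass of loops that exit $S$ and meet the cluster of $x_o$," adds a further unproved reduction: relating the $\mu$-mass of boundary-crossing loops to Green's functions is itself nontrivial on a metric graph.

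Two further points where your route, as written, would run into trouble. First, Russo's formula in the intensity $\alpha$ of a Poisson process with $\sigma$-finite intensity $\mu$ requires justifying that the expected $\mu$-mass of pivotal loops is finite and that the derivative formula applies; the paper sidesteps this entirely by decomposing $\Pb_\alpha$ as $\Pb_{1/2}$ plus an independent soup dominated by Bernoulli $2$-bond percolation, and differentiating only in the Bernoulli parameter, where Russo's formula is elementary. Second, there is a metric-graph-specific obstruction you do not address: after exploring the relevant cluster, its boundary sits at generic points of edges, not at vertices, so for a vertex $x$ on the inner boundary of the unexplored region $K$ the diagonal Green's function $G_{K}(x,x)$ can be arbitrarily small, destroying any uniform lower bound. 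The paper's fix is to work with the $2$-inner vertex boundary $\partial_i^2K_n$ and pivotal $2$-bonds (see Remark \ref{rem:2bond}); your loop-pivotal version needs an analogous device. Relatedly, the conditional independence you invoke ("loops not met during the exploration form an independent soup") must be formulated as a strong Markov property of the occupation field when exploring a random region, which the paper does by exploring from $\partial^{\rm s}B_n$ inward. None of these is fatal to the strategy, but as it stands the proposal leaves the load-bearing inequality and the continuum technicalities unestablished.
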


	Along with the fact that $\Pb_{1/2}[x_o\longleftrightarrow \infty]=0$ (which follows from \cite{Lupu16} and \cite{DPR22}; see Proposition \ref{prop:noperco} for details), this implies that  $\alpha_c$, the critical threshold on ${\cal G}$, is indeed $1/2$.
\begin{rem}
In fact, our proof works for more general metric graphs, see Remark \ref{rem:gen} for discussions on generalizations of  our result.
\end{rem}

\section{Preliminaries}\label{sec:prelim}
In this section, we briefly introduce metric graphs as well as the associated loop soups and discuss a few classical preliminary facts that will be useful to the proof. For a more detailed introduction of metric graphs and related objects, see e.g.\ Section~2 of \cite{Lupu16}.

\smallskip

We start with metric graphs.
\begin{defn}[Metric graphs]\label{def:mg}
Let ${\cal G}^{\rm skeleton}=(V,E,\lambda)$ be an unoriented finite or countably-infinite weighted connected graph of finite degrees such that 
$$
\mbox{$\lambda_{x,y}=\lambda_{y,x}>0$\, for any $(xy)\in E$\quad and \quad $w(x):=\sum_{y\sim x}\lambda_{x,y}<\infty$\; for any $x\in V$}.
$$ Then the metric graph\footnote{Although in the literature metric graphs are usually denoted with a tilde (e.g., ``$\widetilde{{\cal G}}$'' in contrast to its skeleton $\cal G$), in this note we do not follow this convention as we will be almost exclusively working on metric graphs.} ${\cal G}$ associated with ${\cal G}^{\rm skeleton}$ is the metric space where each edge ${(xy)}\in E$ is regarded as an interval of length $(2\lambda_{x,y})^{-1}$, referred to as the {\it metric graph} ${\cal G}$ with skeleton ${\cal G}^{\rm skeleton}$. Given metric graph $\cal G$, we write $V({\cal G})$ and $E({\cal G})$ for the set of vertices and edges of $G$, and for $x,y\in V({\cal G})$ we write $\operatorname{d}(x,y)$ for the {\rm discrete} graph distance on ${\cal G}^{\rm skeleton}$.
\end{defn}
If in addition there is a finite subset $V_o\subset V=V({\cal G})$ such that for any $x\in V$ there is an automorphism of ${\cal G}^{\text{skeleton}}$ mapping $x$ to some $x_o\in V_o$ (this automorphism should also preserve the weights $(\lambda_{x,y})_{(xy)\in E}$), we say that ${\cal G}$ is {\it quasi-transitive}. Examples of quasi-transitive metric graphs include the metric version of periodic lattices (in particular, $\mathbb{Z}^d$) and regular trees with a fixed set of possible choices of edge weights.

\smallskip

We now turn to the Brownian motion and the metric graph  loop soup.

Given a metric graph ${\cal G}$, there is an associated canonical diffusion process, called the Brownian motion $(B^{\cal G}_t)_{t\geq 0}$ on ${\cal G}$. We write $(l_y)_{y\in {\cal G}}$, $(Q_y)_{y\in {\cal G}}$ and $(E_y)_{y\in {\cal G}}$ for the corresponding local time process, the probability law and expectation respectively. 
If ${\cal G}$ is transient, it is possible to define the Green's function $G(\cdot,\cdot):{\cal G}\times {\cal G} \to \mathbb{R}^+$ associated with  $(B^{\cal G}_t)_{t\geq 0}$  as
$$
G(x,y)=  E_x[l_y],\, \forall x,y\in {\cal G},
$$
i.e., the expected value of the local time at $y$ of a Brownian motion  $(B^{\cal G}_t)_{t\geq 0}$ on ${\cal G}$ starting from $x$.  When $x,y\in V({\cal G})$, $G(x,y)$ coincides with the Green's function associated with the Markov jump process on ${\cal G}^{\rm skeleton}$. For any open subset $K$ of ${\cal G}$, it is also possible to define the killed Green's function $G_K(x,y)$ for $x,y\in K$, as the expected local time at $y$ when the Brownian motion started from $x$ exits $K$ for the first time.

\begin{defn}[The metric graph loop soup]
Given a metric graph ${\cal G}$, we can endow a $\sigma$-finite measure $\mu$ on ${\cal L}_{\cal G}$, the space of (rooted) loops on $\cal G$ (i.e.\ continuous paths $l:[0,T] \to {\cal G}$ such that $l(0)=l(T)$) defined via
$$
\mu=\int_{x\in {\cal G}} \int_{0}^\infty Q_{x,x}^t p_t(x,x) \frac{dt}{t} dx
$$
where we denote by $Q^t_{x,x}$ and $p_t(x,x)$ the bridge probability measure and transition kernel of   $(B^{\cal G}_t)_{t\geq 0}$ from $x$ to $x$ of duration $t$ respectively. The metric graph  loop soup with intensity $\alpha>0$ on ${\cal G}$, denoted by $\operatorname{MGLS}_\alpha$ is a Poisson point process on ${\cal L}_{\cal G}$ with intensity $\alpha\mu$. We denote by $\Pb_\alpha=\Pb_\alpha^{\cal G}$ its probability measure and refer to the support of the point process as the configuration $\omega$, which we regard as a subset of ${\cal L}_{\cal G}$. We denote by $\operatorname{tr}(\omega)$ the trace of $\omega$, i.e., the union of the ranges of all loops in $\omega$. 
\end{defn}

\smallskip

We now turn to loop percolation. For two Borel subsets $A,B$ of ${\cal G}$ and a configuration $\omega\sim \operatorname{MGLS}_\alpha$, we say $A {\longleftrightarrow} B$ if $A,B$ are connected by a path entirely lying in $\operatorname{tr}(\omega)$. With slight abuse of notation we also write events like $x\longleftrightarrow \cdot$ as a shorthand for $\{x\}\longleftrightarrow \cdot$.  Furthermore, for any Borel subset $K\subset {\cal G}$,  the event such that $A\cap K$ is connected to $B\cap K$ via loops of $\omega$ {\it entirely} lying in $K$ is denoted by $A\stackrel{K}{\longleftrightarrow}B$. Given $x_o\in {\cal G}$, write $\{x_o\longleftrightarrow \infty\}$ for the event that $x_o$ lies in a cluster of the $\operatorname{tr}(\omega)$ which is unbounded w.r.t.\ the graph distance $\operatorname{d}(\cdot,\cdot)$.

Before proving Theorem \ref{thm:main}, we first briefly discuss the lower bound on percolation threshold. In \cite{DPR22}, a criterion is introduced to determine the (sub-)criticality of metric-graph loop percolation at intensity $1/2$. We say that a metric graph ${\cal G}$ satisfies the ${\rm (Cap)}$ condition if
\begin{equation}\label{eq:cap}
{\rm cap}(A)=\infty\quad\mbox{ for all (d-)unbounded, closed connected set } A\subset {\cal G}.
\end{equation}
Here ${\rm cap}(\cdot)$ stands for the metric-graph capacity, defined in terms of a variational problem as
$$
{\rm cap}(A) := \left( \inf_{\mu} \int_{A\times A} G(x,y) \mu(dx) \mu(dy) \right)^{-1}\,,
$$
where the infimum runs over all probability measures on $A$. 

For quasi-transitive metric graphs (in fact, for any metric graph satisfying the $\rm (Cap)$ condition), an important consequence of \cite{DPR22} and \cite{Lupu16} is that no percolation occurs for $\operatorname{MGLS}_{1/2}$. We summarize this in the following proposition.
\begin{prop}[No percolation at intensity $1/2$]\label{prop:noperco}
For any quasi-transitive transient metric graph ${\cal G}$  and any $x_o\in {\cal G}$, 
$$\Pb_{1/2}[x_o\longleftrightarrow \infty]=0.$$
\end{prop}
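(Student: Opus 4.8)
The plan is to reduce Proposition~\ref{prop:noperco} to a verification of the condition \eqref{eq:cap}. By the criterion of \cite{DPR22} --- which rests on Lupu's exact two-point function from \cite{Lupu16} --- any metric graph ${\cal G}$ satisfying the ${\rm (Cap)}$ condition has $\Pb_{1/2}[x_o\longleftrightarrow\infty]=0$ for every $x_o\in{\cal G}$. Hence it suffices to prove that every quasi-transitive transient metric graph ${\cal G}$ satisfies ${\rm (Cap)}$, i.e.\ that ${\rm cap}(A)=\infty$ whenever $A\subseteq{\cal G}$ is $\operatorname{d}$-unbounded, closed and connected. (Alternatively, one may simply quote that this implication is already contained in \cite{DPR22}.)

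To prove this, fix $v_0\in A\cap V({\cal G})$ and use two facts about a transient quasi-transitive metric graph. First, $C_G:=\sup_{v,w\in V({\cal G})}G(v,w)<\infty$: by the strong Markov property $G(v,w)=Q_v[\,l_w>0\,]\,G(w,w)\le G(w,w)$, and $v\mapsto G(v,v)$ takes finitely many values because it is invariant under the weight-preserving automorphisms of ${\cal G}^{\rm skeleton}$, of which there are finitely many orbits. Second, and crucially, $g(r):=\sup\{\,G(v,w):v,w\in V({\cal G}),\ \operatorname{d}(v,w)\ge r\,\}\to0$ as $r\to\infty$, i.e.\ the Green's function of the skeleton random walk vanishes at infinity --- a classical property of transient quasi-transitive graphs. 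Granting these, since $A$ is $\operatorname{d}$-unbounded one chooses inductively $v_1,v_2,\dots\in A\cap V({\cal G})$ with $\operatorname{d}(v_0,v_k)\ge\max\{1,\,2\operatorname{d}(v_0,v_{k-1})\}$, so that $\operatorname{d}(v_0,v_k)\ge 2^{k-1}$ and, for $j<k$, $\operatorname{d}(v_j,v_k)\ge\operatorname{d}(v_0,v_k)-\operatorname{d}(v_0,v_j)\ge\tfrac12\operatorname{d}(v_0,v_k)$. For the probability measure $\nu_n:=\frac1n\sum_{k=1}^n\delta_{v_k}$ on $A$, using the first fact on the diagonal and the second off it,
\[
\int_{A\times A}G\,d\nu_n\,d\nu_n=\frac1{n^2}\sum_{j,k=1}^n G(v_j,v_k)\ \le\ \frac{C_G}{n}+\frac{2}{n^2}\sum_{1\le j<k\le n}g\big(\tfrac12\operatorname{d}(v_0,v_k)\big)\ \le\ \frac{C_G}{n}+\frac2n\sum_{k=1}^n g\big(\tfrac12\operatorname{d}(v_0,v_k)\big),
\]
and the right-hand side tends to $0$ as $n\to\infty$, since $\operatorname{d}(v_0,v_k)\to\infty$ forces each term $g(\tfrac12\operatorname{d}(v_0,v_k))\to0$ and hence their average to $0$. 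Therefore $\inf_\mu\int_{A\times A}G\,d\mu\,d\mu=0$, i.e.\ ${\rm cap}(A)=\infty$, which is \eqref{eq:cap}; together with \cite{DPR22} this proves the proposition.

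The only real obstacle is the decay $g(r)\to0$. When ${\cal G}$ is non-amenable it is immediate: the skeleton random walk has spectral radius $\rho<1$, so (up to a bounded factor coming from the vertex weights, which are bounded by quasi-transitivity) $p_n(v,w)\le C\rho^n$, and since $p_n(v,w)=0$ for $n<\operatorname{d}(v,w)$ one gets $G(v,w)\le C\rho^{\operatorname{d}(v,w)}/(1-\rho)\to0$. In the amenable case $\rho=1$ and one must instead appeal to heat-kernel estimates on quasi-transitive graphs --- e.g.\ split $G(v,w)=\sum_{n\ge\operatorname{d}(v,w)}p_n(v,w)$ at $n\asymp\operatorname{d}(v,w)^2$ and bound the early range by a Carne--Varopoulos-type estimate and the late range by the diffusive spreading of the walk away from $w$ --- or simply invoke the known fact that the Green's function of any transient quasi-transitive graph vanishes at infinity. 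Two remaining points are routine and I would only flag them: that the metric-graph Green's function and capacity restrict to their skeleton analogues on $V({\cal G})$ (recalled in Section~\ref{sec:prelim}), and that one may restrict the variational problem for ${\rm cap}(A)$ to probability measures supported on $A\cap V({\cal G})$ without decreasing its value.
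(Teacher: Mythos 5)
Your proposal is correct in outline and aims at the same intermediate target as the paper, namely the ${\rm (Cap)}$ condition \eqref{eq:cap}, from which one concludes via Theorem~1.1 of \cite{DPR22}; but you verify that condition by a genuinely different and heavier route. The paper merely observes that quasi-transitivity gives $\inf_{x\in{\cal G}}G(x,x)>0$ and then invokes Lemma~3.4\,(2) of \cite{DPR22}, which derives ${\rm (Cap)}$ from this single lower bound --- no decay of the Green's function is needed at all. You instead exhibit probability measures on $A$ of vanishing energy, which forces you to import the additional fact that $G(v,w)\to0$ as $\operatorname{d}(v,w)\to\infty$ on a transient quasi-transitive graph. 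That fact is true, and your sketch of it is essentially right (spectral radius in the non-amenable case; Carne--Varopoulos together with on-diagonal heat-kernel decay --- which in the amenable case ultimately rests on the polynomial/superpolynomial growth dichotomy --- in general), and the remaining energy computation ($\sup_{v,w}G(v,w)\le\sup_w G(w,w)<\infty$, the well-separated vertices $v_k\in A\cap V({\cal G})$, which exist since a $\operatorname{d}$-unbounded connected subset of a metric graph must contain vertices at arbitrarily large distance, and the Ces\`aro limit) is correct. So your argument buys a self-contained verification of ${\rm (Cap)}$ at the cost of nontrivial heat-kernel input that the paper's citation makes unnecessary. Two minor points: your flagged worry about restricting the variational problem to vertex-supported measures is moot, since plugging in any particular probability measure on $A$ already upper-bounds the infimum, which is all you use; and the transfer from the Gaussian free field statement of \cite{DPR22} to the loop soup goes through Lupu's isomorphism theorem (Theorem~1 of \cite{Lupu16}) rather than the two-point function of Proposition~\ref{prop-two-point-function} --- the paper spells this step out.
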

\begin{proof}
By quasi-transitivity, $\inf_{x\in {\cal G}} G(x,x)>0.$
Hence by Lemma 3.4 (2) in \cite{DPR22}, one sees that $\cal G$ does satisfy the ${\rm (Cap)}$ condition. Theorem 1.1, ibid.\  then implies that the sign cluster (of the metric graph Gaussian free field on $\cal G$) that contains $x_o$ is a.s.\ bounded. By Lupu's isomorphism theorem (Theorem 1 in \cite{Lupu16}), this implies that the cluster containing $x_o$ in $\operatorname{MGLS}_{1/2}$ is also a.s.\ bounded. The claim hence follows.
\end{proof}

In fact, much more is known on percolative properties of $\operatorname{MGLS}_{1/2}$. In particular, the following formula gives the precise two-point connectivity in terms of Green's function. It is a paraphrase of Proposition 5.2 of \cite{Lupu16}.
\begin{prop}[Killed two-point function]\label{prop-two-point-function}
	For any open subset $K\subset {\cal G}$ and $x,y\in K$, it holds that
	\begin{equation}\label{eq-two-point-function}
		\Pb_{1/2}[x\stackrel{K}{\longleftrightarrow}y]=\frac{2}{\pi}\arcsin\frac{G_K(x,y)}{\sqrt{G_K(x,x)G_K(y,y)}}\,.
	\end{equation}

\end{prop}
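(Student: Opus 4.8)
The plan is to reduce the identity to Lupu's isomorphism theorem together with a classical Gaussian orthant‑probability computation. Since the event $\{x\stackrel{K}{\longleftrightarrow}y\}$ is measurable with respect to the loops of $\omega\sim\operatorname{MGLS}_{1/2}$ that are entirely contained in $K$, I would first discard all other loops: by the restriction property of loop soups, the remaining point process is the metric graph loop soup of intensity $1/2$ associated with the Brownian motion on ${\cal G}$ killed upon exiting $K$, whose Green's function is precisely $G_K$. Hence it suffices to prove that, for this loop soup on $K$, the probability that $x$ and $y$ lie in the same cluster of $\operatorname{tr}(\omega)$ equals $\tfrac2\pi\arcsin\rho$, where $\rho:=G_K(x,y)/\sqrt{G_K(x,x)G_K(y,y)}\in[0,1]$ (and $0<G_K(x,x),G_K(y,y)<\infty$).

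Next I would invoke Lupu's isomorphism theorem in the killed setting (cf.\ Theorem~1 and Section~5 of \cite{Lupu16}): there is a coupling of this loop soup with the metric graph Gaussian free field $\phi=(\phi(z))_{z\in K}$ with Dirichlet boundary conditions on $\partial K$ — so that $\Eb[\phi(z)\phi(z')]=G_K(z,z')$ — under which the connected components of $\operatorname{tr}(\omega)$ are exactly the connected components of the open set $\{z:\phi(z)\neq0\}$ (the \emph{sign clusters} of $\phi$), and, conditionally on this partition, the sign of $\phi$ on each cluster is an independent uniform $\pm1$. Under this coupling $\{x\stackrel{K}{\longleftrightarrow}y\}$ becomes exactly the event that $x$ and $y$ lie in a common sign cluster of $\phi$; write $P$ for its probability. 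Since $\phi(x)$ and $\phi(y)$ are nondegenerate centered Gaussians, $\phi(x)\phi(y)\neq0$ almost surely; if $x$ and $y$ lie in the same sign cluster then $\phi(x)\phi(y)>0$, while conditionally on their lying in distinct clusters the two signs are independent fair coins, so
$$
\Pb\big[\phi(x)\phi(y)>0\big]=P\cdot 1+(1-P)\cdot\tfrac12=\tfrac{1+P}{2}.
$$

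It then remains to evaluate $\Pb[\phi(x)\phi(y)>0]$ for the centered bivariate Gaussian vector $(\phi(x),\phi(y))$ of correlation $\rho$. By the classical formula for the orthant probability of a bivariate Gaussian (Sheppard's formula), $\Pb[\phi(x)>0,\phi(y)>0]=\tfrac14+\tfrac1{2\pi}\arcsin\rho$, and by the symmetry $\phi\leftrightarrow-\phi$ the probability of $\{\phi(x)<0,\phi(y)<0\}$ is the same; adding these gives $\Pb[\phi(x)\phi(y)>0]=\tfrac12+\tfrac1\pi\arcsin\rho$. Comparing with the previous display yields $P=\tfrac2\pi\arcsin\rho$, which is \eqref{eq-two-point-function}; the edge cases (e.g.\ $x=y$, giving $\rho=1$ and both sides equal to $1$, or $x,y$ in different connected components of $K$, giving $\rho=0$ and both sides $0$) are consistent by continuity of $\arcsin$.

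The only substantive input is Lupu's isomorphism theorem, which I would cite rather than reprove; its delicate and characteristically metric‑graph feature — the one that fails on a purely discrete graph — is the exact identification of loop‑soup clusters with Gaussian free field sign clusters. Granting that, the remaining ingredients are routine: the restriction property of loop soups, the non‑degeneracy of the killed Green's function, the elementary conditioning on the sign‑cluster partition, and the standard bivariate orthant‑probability identity.
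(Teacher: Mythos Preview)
Your argument is correct and is essentially the standard derivation: reduce to the killed loop soup via the restriction property, apply Lupu's isomorphism to identify loop-soup clusters with GFF sign clusters, and then use Sheppard's orthant formula for a bivariate Gaussian to solve for the connection probability. Note, however, that the paper does not give its own proof of this proposition at all --- it simply quotes it as a paraphrase of Proposition~5.2 of \cite{Lupu16} --- so there is no ``paper's proof'' to compare against; what you have written is in fact a sketch of Lupu's original argument.
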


We now turn to the proof of Theorem \ref{thm:main}.
To facilitate our analysis, we now consider a noised version of loop percolation which is stochastically dominated by loop percolation. 

Let $E^2({\cal G})$ stand for the set of 2-bonds, i.e., pairs of {\it neighboring} edges in ${\cal G}$. We now consider a ``Bernoulli 2-bond percolation model'' on ${\cal G}$ in which we independently open any 2-bond in  $E^2({\cal G})$ with probability $p>0$.  We denote by $P_p$ the corresponding probability measure and refer to the collection of open 2-bonds also as configurations. For a 2-bond configuration $\omega^{\rm b}$, we write $\operatorname{tr}(\omega^{\rm b})$ for the union of the  (closed) intervals corresponding to the 2-bonds  from $\omega^{\rm b}$, which forms a closed subset of $\cal G$.

For any $\alpha>1/2$, we note that a configuration $\omega\sim \operatorname{MGLS}_\alpha$ has the same distribution as $\omega_1 \cup \omega_2$, where $\omega_1,\,\omega_2$ are independent configurations sampled from $\Pb_{1/2}$ and $\Pb_{\alpha-1/2}$, respectively.
By quasi-transitivity of ${\cal G}$, it is clear that $\operatorname{tr}(\omega_2)$ stochastically dominates $ \operatorname{tr}(\omega^{\rm b}_p)$ where $\omega^{\rm b}_p \sim P_p$ for some $p=p(\alpha-1/2,{\cal G})>0$.

We now consider the superposition of the metric graph loop percolation and Bernoulli 2-bond percolation. We denote by $\overline{\Pb}_{\varepsilon}$ the law of the union of two independent configurations $\omega_1\sim \Pb_{1/2}$ and $\omega^{\rm b}_\varepsilon\sim P_\varepsilon$ and by $\overline{\Eb}_{\varepsilon}$ the corresponding expectation. For this model, we regard the  configuration $\omega=\omega_1\cup \omega^{\rm b}_\varepsilon$ as a subset of ${\cal L}^{\cal G} \cup E^2({\cal G})$ and let $\operatorname{tr}(\omega)=\operatorname{tr}( \omega_1)\cup\operatorname{tr}(  \omega^{\rm b}_\varepsilon)$ stand for the trace of $\omega$. Note that here we choose this particular model (instead of the usual Bernoulli bond percolation) for a technical reason that arises when one considers metric graphs; see Remark \ref{rem:2bond} for relevant discussions.


We now verify Russo's formula for this noised loop percolation model. For any event $A$, we say that it is {\it 2-bond finite-range}, if there exists a finite subset $S$ of $E^2({\cal G})$ such that the occurrence of  $A$ does not depend on 2-bonds in $E^2({\cal G})\setminus S$. For an increasing event $A$ and a configuration $\omega$, we say a 2-bond $ef\in E^2({\cal G})$  is {\it upper semi-pivotal} (but abbreviated just as ``pivotal'' below for brevity) for $A$ in $\omega$, if $\omega \cup \{ef\}\in A$ but $\omega \notin A$.  Note that given $A$, the event ``$ef$ is pivotal for $A$ in $\omega$'' is measurable with respect to $\omega$. 

\begin{prop}[Russo's formula]\label{prop-Russo}
	For any $\varepsilon \in [0,1/2)$, and any increasing and 2-bond finite-range event $A$,  it holds that as $\delta\downarrow 0$, 
	\begin{equation}
	\overline{\Pb}_{\varepsilon+\delta}[A]-\overline{\Pb}_\varepsilon[A]=\frac{\delta}{1-\varepsilon}\overline{\Eb}_\varepsilon\Big[\#\{ef\in E^2({\cal G}):\;ef\mbox{ is pivotal for }A\}\Big]+O\big(\delta^2\big).
	\end{equation}
\end{prop}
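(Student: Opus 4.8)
The plan is to condition on the loop-soup part $\omega_1$ and then apply the classical (discrete) Russo's formula to the Bernoulli $2$-bond percolation part $\omega^{\rm b}_\varepsilon$. Since $A$ is $2$-bond finite-range, fix a finite set $S\subset E^2({\cal G})$ witnessing this. For a fixed realization of $\omega_1$, the event $A$ (viewed as a function of $\omega^{\rm b}$) is an increasing event depending only on the finitely many coordinates in $S$; hence by the standard finite-dimensional Russo's formula,
\begin{equation}
\frac{d}{d\varepsilon}\,P_\varepsilon\big[A \mid \omega_1\big]=\sum_{ef\in S} P_\varepsilon\big[ef\text{ is pivotal for }A \mid \omega_1\big].
\end{equation}
Integrating over $\omega_1\sim\Pb_{1/2}$ and using Fubini (legitimate since $S$ is finite and all quantities are bounded by $1$) gives
\begin{equation}
\frac{d}{d\varepsilon}\,\overline{\Pb}_\varepsilon[A]=\overline{\Eb}_\varepsilon\Big[\#\{ef\in E^2({\cal G}):\;ef\text{ is pivotal for }A\}\Big],
\end{equation}
and a Taylor expansion in $\delta$ then yields the claimed statement, provided one also accounts for the $\tfrac{1}{1-\varepsilon}$ factor (see below).

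The one subtlety, and the only place where care is genuinely needed, is the precise form of the constant $\tfrac{\delta}{1-\varepsilon}$ rather than the naive $\delta$. This arises because the relevant notion of pivotality here is \emph{upper} semi-pivotality: $ef$ is pivotal in $\omega$ if $\omega\cup\{ef\}\in A$ but $\omega\notin A$, which is an event measurable with respect to $\omega$ (in particular with respect to the \emph{restriction} of $\omega$ to $E^2({\cal G})\setminus\{ef\}$ together with the information that $ef$ is \emph{closed}). Writing $\overline{\Pb}_\varepsilon[A]=\sum_{\eta}\big(\text{weight of }\eta\big)$ over $2$-bond configurations $\eta$ on $S$ and differentiating in $\varepsilon$ in the standard way, the gain term when increasing $\varepsilon$ comes from configurations $\eta$ not in $A$ that acquire a newly-opened pivotal $2$-bond; since the event ``$ef$ pivotal'' already includes ``$ef$ closed'', the conditional probability that a given $2$-bond is open \emph{given} it was sampled under $P_\varepsilon$ and found closed, when re-sampled at level $\varepsilon+\delta$, is $\tfrac{\delta}{1-\varepsilon}$, which is exactly the displayed prefactor. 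Concretely, $\overline{\Eb}_\varepsilon[\#\{\text{pivotal }ef\}]$ is an expectation over events on which the relevant bonds are closed; opening any one of them (an event of probability $\tfrac{\varepsilon+\delta-\varepsilon}{1-\varepsilon}=\tfrac{\delta}{1-\varepsilon}$ conditionally) moves the configuration into $A$, while the probability of opening two or more such bonds is $O(\delta^2)$.

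I do not anticipate a serious obstacle: finite-range-ness reduces everything to a finite product space, where Russo's formula is elementary, and conditioning on $\omega_1$ decouples the loop soup cleanly since $\omega_1$ and $\omega^{\rm b}_\varepsilon$ are independent and $A$'s dependence on $\omega_1$ plays no role in the differentiation. The main point to get right is bookkeeping: (i) confirm measurability of ``$ef$ is pivotal for $A$'' with respect to $\omega$ (already noted in the text preceding the statement), (ii) justify interchanging $\tfrac{d}{d\varepsilon}$ and $\Eb_{\omega_1}$ via boundedness and finiteness of $S$, and (iii) track the $\tfrac{1}{1-\varepsilon}$ arising from the upper-semi-pivotal convention. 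Once these are in place the $O(\delta^2)$ error is the usual union bound over pairs of bonds in $S$.
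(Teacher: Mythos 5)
Your proposal is correct and, at its core, runs on the same mechanism as the paper's proof: everything reduces to the finite-dimensional Bernoulli computation on the witnessing set $S$, with the loop soup $\omega_1$ playing no role beyond independence, and the factor $\tfrac{\delta}{1-\varepsilon}$ coming from the fact that "pivotal" here means \emph{upper semi-pivotal} (so the event already forces $ef$ to be closed). The presentational difference is that the paper works directly with the finite difference, coupling $\overline{\Pb}_{\varepsilon+\delta}$ as $\omega\cup\omega'$ with $\omega\sim\overline{\Pb}_\varepsilon$ and an independent sprinkle $\omega'\sim P_{\delta/(1-\varepsilon)}$, and then applies inclusion--exclusion over $S$; you instead invoke the differential form of the classical Russo formula conditionally on $\omega_1$ and then Taylor-expand. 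Both are valid; your route buys the exact derivative identity $\tfrac{d}{d\varepsilon}\overline{\Pb}_\varepsilon[A]=\tfrac{1}{1-\varepsilon}\overline{\Eb}_\varepsilon[\#\{\text{pivotal }ef\}]$, while the paper's coupling gives the $O(\delta^2)$ bound (uniform in $\varepsilon$, which is why the hypothesis $\varepsilon<1/2$ appears) with no differentiability discussion. One small caveat: your second displayed equation is only correct with the classical (two-sided) notion of pivotality; with the paper's upper semi-pivotal convention it must carry the prefactor $\tfrac{1}{1-\varepsilon}$, since $\overline{\Pb}_\varepsilon[ef\text{ upper semi-pivotal}]=(1-\varepsilon)\,\overline{\Pb}_\varepsilon[ef\text{ classically pivotal}]$. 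You do supply exactly this correction in the following paragraph, so the argument closes, but the display as written should not be quoted as the final identity.
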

Here we require $\varepsilon \in [0,1/2)$ to avoid dependence of constants from $O(\cdot)$ on $\varepsilon$.
\begin{proof}
Let $\omega \sim \overline{\Pb}_{\varepsilon}$ and $\omega' \sim P_{\delta/(1-\varepsilon)}$ be independent and write $\omega^+=\omega \cup \omega'$. Then, $\omega^+ \sim \overline{\Pb}_{\varepsilon+\delta}$. Hence,
\begin{equation*}
\begin{split}
\overline{\Pb}_{\varepsilon+\delta}[A]-\overline{\Pb}_\varepsilon[A]\;&= \overline{\Pb}_{\varepsilon} \otimes P_{\delta/(1-\varepsilon)}\Big[ \omega^+\in A\mbox{ but } \omega \notin A\Big ]\\
&= \overline{\Eb}_{\varepsilon}\Big[ P_{\delta/(1-\varepsilon)}[\omega\cup\omega'\in A | \omega] 1_{\omega\notin A}\Big]\\
&=\overline{\Eb}_{\varepsilon} \Big[ \frac{\delta}{1-\varepsilon}\sum_{ef\in E^2({\cal G})} 1_{\omega \cup\{ef\} \in A,\;\omega\notin A}+O\big(\delta^2\big)\Big]\\
&=\frac{\delta}{1-\varepsilon}\overline{\Eb}_\varepsilon\Big[\#\{ef\in E^2({\cal G}):\;ef\mbox{ is pivotal for }A\}\Big]+O\big(\delta^2\big)
\end{split}
\end{equation*} 
where the constant in $O(\delta^2)$ does not depend on $\varepsilon$ (it only depends on $\cal G$ and the event $A$) and in the second-to-last equality we use inclusion-exclusion principle and the finite-range dependence of $A$ as in the proof of the classical Russo's formula for Bernoulli percolation.
\end{proof}
We briefly remark that it is also possible to define pivotal 2-bonds in the following manner. Consider $\omega= \omega_1\cup \omega^{\rm b}$ where $\omega_1\subset {\cal L}_{\cal G}$ is a metric graph loop configuration and $\omega^{\rm b}\subset E^2({\cal G})$ is a 2-bond percolation configuration. We say that a 2-bond $ef$ is pivotal for event $A$ in $\omega$ if $\omega\cup \{ef\}\in A$ but $\omega_1\cup (\omega^{\rm b}\setminus\{ef\}) \notin A$ and in this case the RHS of Russo's formula becomes $$\delta\cdot  \overline{\Eb}_{\varepsilon}[\#\{ef\in  E^2({\cal G}):\,ef\mbox{ is pivotal}\}]+O(\delta^2),$$ which is also sufficient for our analysis.

Finally, we turn to the FKG inequality for the noised loop percolation measure $\overline{\Pb}_\varepsilon$, which is a direct consequence of the FKG inequality for general Poisson processes (see Lemma 2.1 of \cite{Jans84}) and the observation that when looking at percolative properties, one can effectively regard $\omega\sim \overline{\Pb}_\varepsilon$ as a Poisson process in ${\cal L}_{\cal G} \cup E^2({\cal G})$.
\begin{prop}[FKG inequality]\label{prop:FKG}
For $\varepsilon\in[0,1)$ and any increasing events $A$ and $B$,
$$
\overline{\Pb}_\varepsilon[A\cap B]\geq \overline{\Pb}_\varepsilon[A]\overline{\Pb}_\varepsilon[B].
$$
\end{prop}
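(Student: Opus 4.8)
The plan is to realize a configuration $\omega\sim\overline{\Pb}_\varepsilon$ as a monotone (increasing) deterministic image of a single Poisson point process on an enlarged space, and then read off the FKG inequality from the positive association of increasing functionals of Poisson processes, i.e.\ Lemma~2.1 of \cite{Jans84}. The point is purely structural: $\overline{\Pb}_\varepsilon$ is built from two independent pieces, each of which is itself (a monotone image of) a Poisson process.

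Concretely, $\omega_1\sim\Pb_{1/2}$ is by definition already a Poisson point process on ${\cal L}_{\cal G}$ with intensity $\tfrac12\mu$, and "increasing" for loop configurations means stability under adding loops, which is exactly the order in which Poisson FKG applies. For the 2-bond part, I would represent $\omega^{\rm b}_\varepsilon\sim P_\varepsilon$ as follows: let $\Xi^{\rm b}$ be a Poisson point process on the countable set $E^2({\cal G})$ with ($\sigma$-finite) intensity measure $-\log(1-\varepsilon)\sum_{ef\in E^2({\cal G})}\delta_{ef}$, and declare $ef$ open precisely when $\Xi^{\rm b}(\{ef\})\geq 1$. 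Since $Q_y(\mathrm{Poisson}(-\log(1-\varepsilon))=0)=1-\varepsilon$ and distinct atoms are independent, the resulting set of open 2-bonds has law $P_\varepsilon$; moreover the map $\Xi^{\rm b}\mapsto\{ef:\Xi^{\rm b}(\{ef\})\geq 1\}$ is increasing, as adding points to $\Xi^{\rm b}$ can only open more 2-bonds.

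Putting the two together, on the disjoint union ${\cal X}:={\cal L}_{\cal G}\sqcup E^2({\cal G})$ consider the Poisson point process $\Xi:=\omega_1\sqcup\Xi^{\rm b}$ with intensity $\tfrac12\mu\oplus\bigl(-\log(1-\varepsilon)\bigr)\sum_{ef}\delta_{ef}$, and let $\Phi(\Xi):=\omega_1\cup\{ef:\Xi^{\rm b}(\{ef\})\geq 1\}$, so that $\Phi(\Xi)\sim\overline{\Pb}_\varepsilon$. The map $\Phi$ is increasing with respect to the natural partial order on point configurations (more atoms in $\Xi$ yields more loops and more open 2-bonds in $\Phi(\Xi)$, hence a larger trace). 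Consequently, for any increasing event $A$ in $\omega$, the pullback $\Phi^{-1}(A)$ is an increasing event for the Poisson process $\Xi$, and likewise for $B$. Lemma~2.1 of \cite{Jans84} then gives $\Eb\bigl[1_{\Phi^{-1}(A)}1_{\Phi^{-1}(B)}\bigr]\geq\Eb\bigl[1_{\Phi^{-1}(A)}\bigr]\Eb\bigl[1_{\Phi^{-1}(B)}\bigr]$, which is exactly $\overline{\Pb}_\varepsilon[A\cap B]\geq\overline{\Pb}_\varepsilon[A]\,\overline{\Pb}_\varepsilon[B]$.

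The only point requiring some care — and the one I would expect to be the main (mild) obstacle — is the measure-theoretic justification of applying the Poisson FKG inequality to \emph{arbitrary} increasing events, since $\mu$ and the 2-bond intensity have infinite total mass. I would handle this in the standard way: first establish the inequality for increasing events $A,B$ that are 2-bond finite-range and loop-local (depending only on loops meeting a fixed bounded subset of ${\cal G}$), for which $\Phi^{-1}(A),\Phi^{-1}(B)$ depend only on the restriction of $\Xi$ to a set of finite intensity mass, and then pass to general increasing events by monotone approximation (any increasing event is, up to a null set, an increasing limit of such local increasing events), using continuity of probability along monotone limits. The core Poisson-process input itself is entirely off the shelf.
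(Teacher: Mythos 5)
Your proposal is correct and is essentially the argument the paper intends: the paper justifies Proposition \ref{prop:FKG} in one sentence as a direct consequence of Lemma 2.1 of Janson together with the observation that $\omega\sim\overline{\Pb}_\varepsilon$ can be regarded as a Poisson process on ${\cal L}_{\cal G}\cup E^2({\cal G})$, which is precisely the construction you carry out in detail (including the standard $-\log(1-\varepsilon)$ atom-intensity trick for the Bernoulli part and the monotonicity of the resulting map).
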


\section{Proof of the main result}
Without loss of generality we assume $x_o\in V({\cal G})$. Recall that  $\operatorname{d}(\cdot,\cdot)$ stands for the {\it discrete} graph metric. We write $$ B_n=\{x\in V({\cal G}):\operatorname{d}(x_o,x)\leq n\}\;\mbox{ and }\;\partial^{
\rm s} B_n=\{x\in V({\cal G}):\operatorname{d}(x_o,x)=n\}\,.$$
With slight abuse of notation, for $\omega \sim \overline{\Pb}_\varepsilon$ we still denote by $\{ x \longleftrightarrow A\}$ the connectivity event that a point $x$ and a Borel set $A$ are connected through $\operatorname{tr}(\omega)$. We then write 
$$
f_{n}(\varepsilon):=\overline{\Pb}_\varepsilon\big[x_o\longleftrightarrow \partial^{\rm s} B_n\big].
$$
The crux of the proof is the following differential inequality, which is remotely inspired by \cite{DCT16}.
\begin{prop}\label{prop:ODE}
	There exist constants $c,C>0$ (depending on ${\cal G}$ and $x_o$ only), such that for any $n
	\geq 3$, it holds
	\begin{equation}
		f_{n,+}^{\prime}(\varepsilon)\ge c(1-Cf_n(\varepsilon))\,,\quad \forall\  0\le \varepsilon<1/2\,,
	\end{equation}
 where $f_{n,+}'$ stands for the right derivative of $f_n$.
\end{prop}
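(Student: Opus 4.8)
The plan is to apply Russo's formula to the event $\mathsf A_n:=\{x_o\longleftrightarrow\partial^{\rm s}B_n\}$ and then harvest pivotal $2$-bonds from the boundary of the cluster of $x_o$. Since connecting $x_o$ to $\partial^{\rm s}B_n$ depends only on loops and $2$-bonds meeting the finite edge-set of $B_n$, the event $\mathsf A_n$ is increasing and $2$-bond finite-range, so Proposition~\ref{prop-Russo} applies; as $\varepsilon<1/2$ this gives
\begin{equation*}
f_{n,+}'(\varepsilon)=\frac{1}{1-\varepsilon}\,\overline{\Eb}_\varepsilon[N_n]\ \ge\ \overline{\Eb}_\varepsilon[N_n],\qquad N_n:=\#\{ef\in E^2({\cal G}):ef\text{ is pivotal for }\mathsf A_n\}.
\end{equation*}
Hence it suffices to prove $\overline{\Eb}_\varepsilon[N_n]\ge c'\,(1-f_n(\varepsilon))$ for some $c'=c'({\cal G},x_o)>0$, which yields the proposition (in fact with $C=1$).

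Next I would condition on the cluster $\mathcal C=\mathcal C(\omega)$ of $x_o$ in $\operatorname{tr}(\omega)$. As $N_n$ vanishes unless $x_o\not\longleftrightarrow\partial^{\rm s}B_n$, we may assume $\mathcal C$ is $\operatorname{d}$-bounded, closed, connected, and disjoint from $\partial^{\rm s}B_n$. By the restriction property of the loop soup---and since, given $\mathcal C=A$, the $2$-bonds with one edge inside $A$ and one outside are forced closed while all loops and $2$-bonds supported in ${\cal G}\setminus A$ are unconstrained---the configuration off $A$ is, conditionally, an independent copy of $\overline{\Pb}^{\,{\cal G}\setminus A}_\varepsilon$. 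Now, for each vertex $v$ in the outer vertex-boundary $\partial_{\mathrm o}A$ of $A$ (so $v\notin A$ and $v\sim u$ for some $u\in V(A)$) pick a neighbour $w\neq u$ of $v$: opening the $2$-bond $\{(uv),(vw)\}$ connects $x_o$ (through $\mathcal C\ni u$) to $\partial^{\rm s}B_n$ as soon as the off-$A$ configuration already connects $v$ to $\partial^{\rm s}B_n$. Since distinct $v$'s give distinct $2$-bonds, taking conditional expectations yields
\begin{equation*}
\overline{\Eb}_\varepsilon[N_n]\ \ge\ \overline{\Eb}_\varepsilon\big[\Phi(\mathcal C)\ ;\ x_o\not\longleftrightarrow\partial^{\rm s}B_n\big],\qquad \Phi(A):=\sum_{v\in\partial_{\mathrm o}A}\overline{\Pb}^{\,{\cal G}\setminus A}_\varepsilon\big[v\longleftrightarrow\partial^{\rm s}B_n\big],
\end{equation*}
so the whole matter reduces to the uniform bound $\Phi(A)\ge c'>0$ over admissible $A$: then $\overline{\Eb}_\varepsilon[N_n]\ge c'\,\overline{\Pb}_\varepsilon[x_o\not\longleftrightarrow\partial^{\rm s}B_n]=c'(1-f_n(\varepsilon))$.

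For the uniform bound I would use Lupu's two-point function. For $y\in\partial^{\rm s}B_n$, Proposition~\ref{prop-two-point-function} and $M_0:=\sup_xG(x,x)<\infty$ (quasi-transitivity) give
\begin{equation*}
\overline{\Pb}^{\,{\cal G}\setminus A}_\varepsilon[v\longleftrightarrow\partial^{\rm s}B_n]\ \ge\ \Pb^{{\cal G}\setminus A}_{1/2}\big[v\stackrel{{\cal G}\setminus A}{\longleftrightarrow}y\big]=\frac{2}{\pi}\arcsin\frac{G_{{\cal G}\setminus A}(v,y)}{\sqrt{G_{{\cal G}\setminus A}(v,v)G_{{\cal G}\setminus A}(y,y)}}\ \ge\ \frac{2}{\pi M_0}\,G_{{\cal G}\setminus A}(v,y).
\end{equation*}
Summing over $y$ and then over $v\in\partial_{\mathrm o}A$, and using reversibility (with weights bounded by quasi-transitivity), $\Phi(A)$ is bounded below, up to a positive constant, by $\sum_{y\in\partial^{\rm s}B_n}\sum_{v\in\partial_{\mathrm o}A}G_{{\cal G}\setminus A}(y,v)$, i.e.\ by the total expected local time on $\partial_{\mathrm o}A$ accumulated before hitting $A$ by Brownian motions started on $\partial^{\rm s}B_n$. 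Since in the metric graph any path reaching $A$ from outside must first meet $\partial_{\mathrm o}A$, this aggregate is $\gtrsim\sum_{y\in\partial^{\rm s}B_n}Q_y[\text{BM hits }A]\ge\sum_{y\in\partial^{\rm s}B_n}Q_y[\text{BM hits }x_o]=\frac{1}{G(x_o,x_o)}\sum_{y\in\partial^{\rm s}B_n}G(y,x_o)$; and since the transient Brownian motion from $x_o$ a.s.\ hits $\partial^{\rm s}B_n$ and afterwards accumulates at least $\inf_xG(x,x)>0$ units of local time there, $\sum_{y\in\partial^{\rm s}B_n}G(x_o,y)=E_{x_o}\big[\sum_{y\in\partial^{\rm s}B_n}l_y\big]\ge\inf_xG(x,x)>0$. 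Chaining these estimates (absorbing bounded weight ratios) gives $\Phi(A)\ge c'$, and the proposition follows.

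The hard part is precisely this uniform lower bound on $\Phi(A)$: a priori the cluster $A$ of $x_o$ is only $\operatorname{d}$-bounded and may be large and geometrically irregular, so no single boundary $2$-bond need be pivotal with bounded probability; one must instead show that the loop soup killed on $A$ still reaches $\partial^{\rm s}B_n$ from $\partial A$ with total weight bounded away from $0$ uniformly in $n$ and $A$, which is exactly where transience and quasi-transitivity enter, through Lupu's two-point formula and the Green's-function bounds above. Two routine but non-cosmetic points to be treated carefully are (i) the precise conditional law of the configuration off $\mathcal C$ given $\mathcal C$ in the metric-graph setting (the restriction property), and (ii) the bookkeeping separating edge-interiors from vertices when declaring a $2$-bond pivotal (e.g.\ replacing ``$v\longleftrightarrow\partial^{\rm s}B_n$'' by ``the half-open edge $(vw)$ is connected to $\partial^{\rm s}B_n$ off $A$''); neither changes the structure above.
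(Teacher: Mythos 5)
Your overall strategy (Russo's formula plus a harvest of pivotal $2$-bonds on a cluster boundary, with Lupu's two-point function supplying the quantitative input) is the right one, and your endgame identity $\sum_{y\in\partial^{\rm s}B_n}G(x_o,y)=E_{x_o}[\sum_y l_y]\ge\inf_xG(x,x)$ is a nice observation. But the reduction to it contains two genuine errors. First, you lower-bound $\overline{\Pb}^{\,{\cal G}\setminus A}_\varepsilon[v\longleftrightarrow\partial^{\rm s}B_n]$ by \emph{summing} the two-point bounds $\frac{2}{\pi M_0}G_{{\cal G}\setminus A}(v,y)$ over all $y\in\partial^{\rm s}B_n$. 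The inequality $\Pb[v\longleftrightarrow\partial^{\rm s}B_n]\ge\Pb[v\longleftrightarrow y]$ holds for each $y$ separately, so it only yields the maximum over $y$, not the sum; the sum is a first moment (expected number of connected boundary vertices), which does not lower-bound the connection probability without a second-moment input, and controlling that second moment uniformly over the unknown cluster $A$ is not routine. Second, even granting the sum, the step from $\sum_y\sum_vG_{{\cal G}\setminus A}(y,v)$ to $\sum_yQ_y[\text{BM hits }A]$ requires $G_{{\cal G}\setminus A}(v,v)=E_v[l_v\text{ before hitting }A]$ to be bounded below uniformly for $v\in\partial_{\mathrm o}A$, and this fails: the cluster $A$ can invade the edge $(uv)$ up to an arbitrarily small metric distance from $v$, making $G_{{\cal G}\setminus A}(v,v)$ arbitrarily small. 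This is precisely the pitfall the paper isolates in Remark~\ref{rem:2bond}; you flag it as ``bookkeeping'' in point (ii), but it is where the quantitative chain actually breaks, and fixing it forces one to work with vertices at graph distance $2$ from the cluster rather than the outer vertex boundary.

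The paper sidesteps both problems by running the exploration in the opposite direction: it conditions on the cluster $C_n$ of $\partial^{\rm s}B_n$ (explored from the outside in), lets $K_n$ be the component of $B_n\setminus C_n$ containing $x_o$, and sums the two-point functions $\Pb_{1/2}[x_o\stackrel{K_n}{\longleftrightarrow}x]$ over the \emph{$2$-inner} boundary $\partial_i^2K_n$. Because these vertices are at graph distance $2$ from $C_n$, $G_{K_n}(x,x)\ge c_2$ uniformly; and because a single Brownian motion started at the fixed point $x_o$ must a.s.\ hit some vertex of $\partial_i^2K_n$ before exiting $K_n$, the sum of hitting probabilities is at least $1$ with no second-moment argument needed. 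The factor $1-Cf_n(\varepsilon)$ then comes from an FKG bound on the event that $C_n$ comes within distance $2$ of $x_o$. Your version, which fixes the random set $A$ near $x_o$ and asks many Brownian motions started on $\partial^{\rm s}B_n$ to return to it, is quantitatively much more delicate and, as written, does not go through.
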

\begin{proof}
Note that $\{x_o\longleftrightarrow \partial^{\rm s} B_n\}$ is increasing and finite-range. From Proposition~\ref{prop-Russo}, it suffices to show that there exists $c>0$ such that for any $\varepsilon \in [0,1/2)$,  
\[
\overline{\Eb}_\varepsilon\big[ \#\{ef\text{ is pivotal for }x_o\longleftrightarrow \partial^{\rm s} B_n\}\big]\ge c\big(1-Cf_n(\varepsilon)\big)\,.
\]
For any configuration $\omega$ sampled from $\overline{\Pb}_\varepsilon$, denote by $\omega_{B_n}$ its restriction in loops and 2-bonds that intersect $B_n$. We consider the union of connected components of $\operatorname{tr}(\omega_{B_n})$ intersecting $\partial^{\rm s} B_n$, and denote its closure by $C_n=C_n(\omega)$. Note that
\begin{equation}\label{eq:nointersection}
\partial C_n \cap V({\cal G})=\emptyset.
\end{equation}
  Define $K_n$ as the component of $B_n\setminus C_n$ containing $x_o$ (if $x_o\in C_n$, then set $K_n=\emptyset$).

\begin{figure}[!ht]
\centering
\includegraphics[scale=.85]{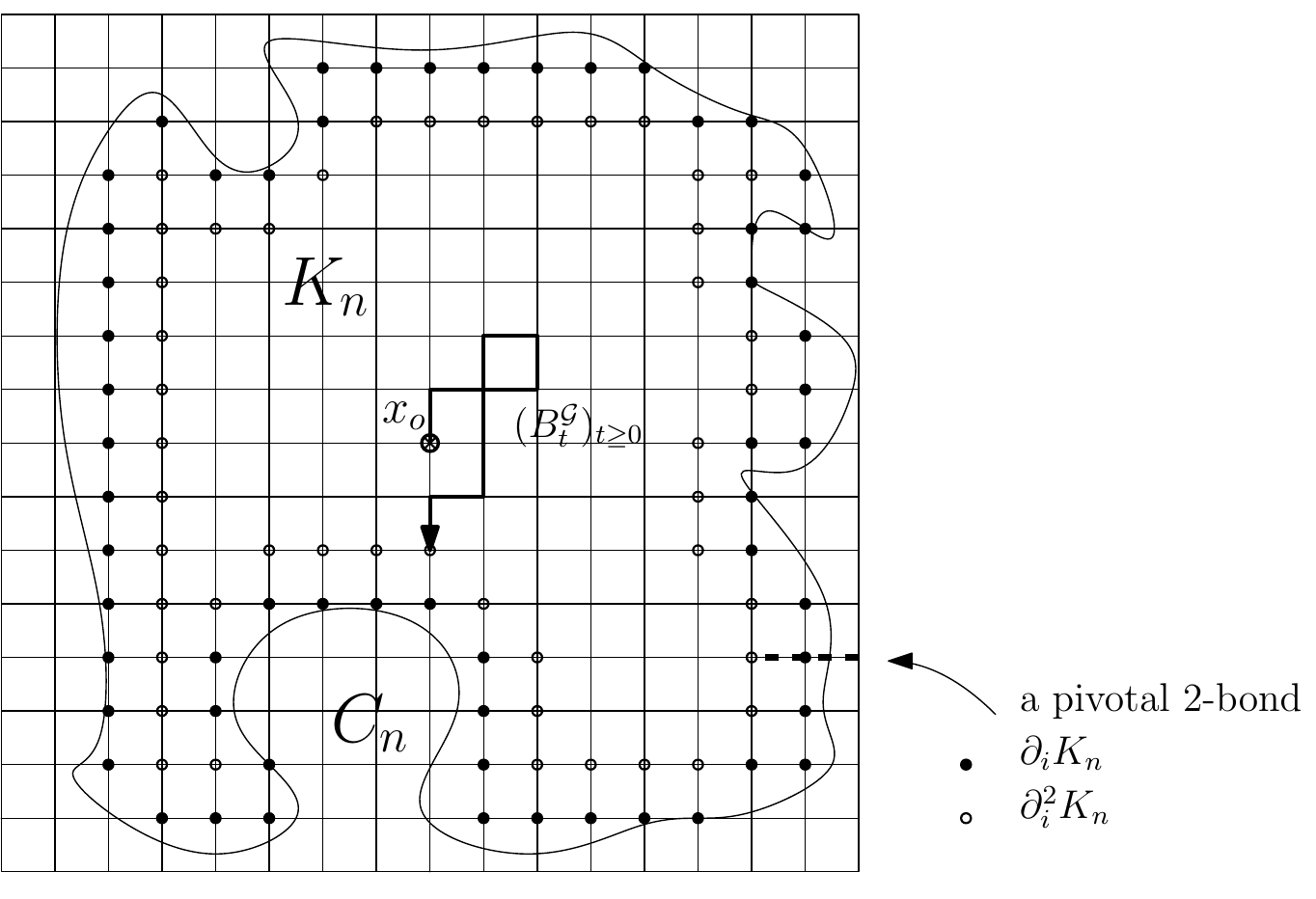}
\caption{\small A possible realization of $K_n$, $C_n$, $\partial_i K_n$, $\partial^2_i K_n$ and an exemplary pivotal 2-bond. Note that in this figure $B_n$ and $C_n$ are only partially depicted.}
\label{fig1}
\end{figure}

We now explore $C_n$ from $\partial^{\rm s} B_n$ inwards.   We claim the following strong Markov property: conditioned on the realization of $C_n$, the configuration in $K_n$ is the independent union of the loop soup of intensity $1/2$ in $K_n$ and an independent Bernoulli 2-bond percolation with parameter $\varepsilon$ on $K_n^{\rm s}:=V({\cal G})\cap K_n$, the skeleton of $K_n$. Moreover, all of them are independent from the configurations in $C_n$. To see this, one may view the noised loop percolation configuration as $\mathbb{R}\times \mathbb{N}$-valued random field\footnote{In other words, we store the occupation time field of the original $\operatorname{MGLS}_{1/2}$ and the total count of 2-bonds on each bond  in the first and second coordinates respectively so on the boundary of $C_n$ both coordinates are zero simultaneously.} and show that it satisfies a simple Markov property and establish strong Markov property through Theorem 4, Chapter 2, Section 2.4 of \cite{rozanov1982markov}. 

%

Write $C_n^s:=V({\cal G})\cap C_n$. Let 
$$
E(K_n,C_n)=\big\{ef=(x,y)(y,z)\in E^2({\cal G})\,:\,x,y\in K_n^{\rm s},\;z\in C_n^s\big\}
$$
be the boundary 2-bonds of $K_n$ and let
$$
\partial_i K_n:=\big\{x\in K_n^s: {\rm d}(x,C^{\rm s}_n)=1\big\}\; \mbox{ and }\;\partial^2_i K_n=\big\{x\in K_n^s: {\rm d}(x,C^{\rm s}_n)=2\big\}
$$ 
stand for the inner vertex boundary and 2-inner vertex boundary of $K_n$ respectively. 
Write 
$$\mathcal A_n:=\{
\operatorname{d}(x_o,C_n^{\rm s})>2\}.
$$
Note that $\mathcal A_n$ is measurable with respect to the realization of $C_n$. See Figure \ref{fig1} for illustration.

With \eqref{eq:nointersection} in mind, we now make the following observation: conditioned on any realization of $C_n$ such that $\mathcal A_n$ holds, for any $ef=(x,y)(y,z)\in E(K_n,C_n)$, $ef$ is pivotal for $x_o\longleftrightarrow \partial^{\rm s} B_n$ if  $x_o\stackrel{K_n}{\longleftrightarrow} x$. As a result we have
	\begin{align}
		&\ \overline{\Eb}_\varepsilon\big[\#\{ef\in E(K_n,C_n);\;ef\text{ is pivotal\ for }x_o\longleftrightarrow \partial^{\rm s} B_n\}\big]\nonumber\\
		\ge&\ \overline{\Eb}_\varepsilon \bigg[\sum_{ef\in E(K_n,C_n)}\overline{\Pb}_\varepsilon\Big[ef \text{ is pivotal\ for }x_o\longleftrightarrow \partial^{\rm s} B_n\Big| C_n\Big]\bigg]\nonumber\\
			\ge&\ \overline{\Pb}_\varepsilon[\mathcal A_n]\cdot \overline{\Eb}_\varepsilon\bigg[ \sum_{ef=(x,y)(y,z)\in E(K_n,C_n),\,x\in \partial^2_i K_n}\overline{\Pb}_\varepsilon\Big[x_o\stackrel{K_n}{\longleftrightarrow} x \Big | C_n\Big]\bigg| {\cal A}_n\bigg]\nonumber\\
		\ge&\ \overline{\Pb}_\varepsilon[\mathcal A_n]\cdot \overline{\Eb}_\varepsilon\bigg[\sum_{x\in \partial^2_i K_n}\Pb_{1/2}\Big[x_o\stackrel{K_n}{\longleftrightarrow} x\Big | C_n\Big]\bigg| {\cal A}_n\bigg]\,.\label{eq-two-terms}
	\end{align}

To estimate the first term in \eqref{eq-two-terms}, for $x\in V({\cal G})$, we define $\mathcal O(x)$ for the event that all edges incident to $x$ and graph neighbors of $x$ are covered by a single loop in $\operatorname{MGLS}_{1/2}$. Also we note that $\mathcal A_n^c$ is equivalent to the event that one of the neighbors or 2-neighbors of $x_o$ in $V({\cal G})$ is connected to $\partial^{\rm s} B_n$. Then it is clear that both $\mathcal A_n^c$ and $\mathcal O(x_o)$ are increasing, and $\mathcal A_n^c\cap \mathcal O(x_o)\subset \{x_o\longleftrightarrow \partial^{\rm s} B_n\}$. By FKG inequality (see Proposition \ref{prop:FKG}), we get
\[
\overline{\Pb}_\varepsilon[\mathcal A_n]=1-\overline{\Pb}_\varepsilon[\mathcal A_n^c]\ge 1-\frac{\overline{\Pb}_\varepsilon[\mathcal A_n^c\cap \mathcal O(x_o)]}{\Pb_{1/2}[\mathcal O(x_o)]}\ge 1-C\overline{\Pb}_\varepsilon[x_o\longleftrightarrow \partial^{\rm s} B_n]=1-Cf_n(\varepsilon),
\] 
where $$C=C({\cal G})=\sup_{x\in V({\cal G})}\left(\Pb_{1/2}[\mathcal O(x)]\right)^{-1}$$ is a positive constant depending on ${\cal G}$ only thanks to quasi-transitivity. For the second term, 
from \eqref{eq-two-point-function} we see that
\begin{equation}\label{eq-est-1}
\begin{aligned}
\sum_{x\in \partial^2_i K_n}\Pb_{1/2}[x_o\stackrel{K_n}{\longleftrightarrow} x\mid C_n]=&\ \sum_{x\in \partial_i^2K_n}\frac{2}{\pi}\arcsin\frac{G_{K_n}(x_o,x)}{\sqrt{G_{K_n}(x_o,x_o)G_{K_n}(x,x)}}\\
\ge&\ c_1\sum_{x\in \partial^2_i K_n} \frac{G_{K_n}(x_o,x)}{\sqrt{G_{K_n}(x,x)}}\,,
\end{aligned}
\end{equation}
where $c_1={2}\big(\pi\sqrt{G(x_o,x_o)}\big)^{-1}$ depends only on ${\cal G}$ and $x_o$, and we have used the fact that $\arcsin x\ge x$ for $x\in (0,1]$ and $G_{K_n}(x_o,x_o)\le G(x_o,x_o)$ for any $K_n\subset{\cal G} $. Furthermore, we have for each $x\in \partial_i^2 K_n$,
\begin{equation}\label{eq-est-2}
	\begin{aligned}
\frac{G_{K_n}(x_o,x)}{\sqrt{G_{K_n}(x,x)}}=&\ \sqrt{G_{K_n}(x,x)}\cdot\frac{G_{K_n}(x_o,x)}{G_{K_n}(x,x)}\\
=&\ \sqrt{G_{K_n}(x,x)}\cdot Q_{x_o}\left[ (B^{\cal G}_t)_{t\geq 0}\text{  hits }x\text{ before exiting }K_n\right]\,.
\end{aligned}
\end{equation}
 In addition, $x\in \partial_i^2 K_n$ implies  $G_{K_n}(x,x)\ge c_2$ for some constant $c_2>0$ depending only on ${\cal G}$.  
 
 Combining \eqref{eq-est-1} with \eqref{eq-est-2}, we see that the second term in \eqref{eq-two-terms} is bounded from below by
\begin{equation}\label{eq:lb}
c_1\sqrt{c_2}\sum_{x\in \partial_i^2K_n}Q_{x_o}
\left[ (B^{\cal G}_t)_{t\geq 0}\text{ hits }x\text{ before leaving }K_n\right]\,.
\end{equation}
Note that under the event $\mathcal A_n$, the sum in the above formula is trivially bounded from below by $1$, since a path of the Brownian motion  $(B^{\cal G}_t)_{t\geq 0}$ on ${\cal G}$ starting from $x_o$ almost surely hits at least one vertex $x\in \partial_i^2 K_n$ before exiting $K_n$. Therefore, the proof is completed by taking $c=c_1\sqrt{c_2}$.
\end{proof}
\begin{rem}\label{rem:2bond}
It is worth noting that there is a difficulty in the argument above if we only consider $\partial_i K_n$, the interior graph boundary of $K_n$ (instead of its 2-inner boundary) in summing the two point functions. Indeed, for some $x\in \partial_i K_n$, it could happen that $x$ is very close to $C_n$ in the metric sense, and in this case one may not bound $G_{K_n}(x,x)$ from below by any universal constant. \end{rem}
\begin{proof}[Proof of Theorem \ref{thm:main}]
The claim follows from Proposition \ref{prop:ODE} and a standard and classical argument. Recall (from the definition of $\overline{\Pb}_\varepsilon$ in  Section \ref{sec:prelim}) that  for any $\alpha>1/2$, there exists some $\varepsilon>0$ such that $\overline{\Pb}_\varepsilon$ is dominated $\Pb_\alpha$. Hence, it suffices to show that for any $\varepsilon\in(0,1/2)$, 
$$f(\varepsilon):=\overline{\Pb}_\varepsilon[x_o\longleftrightarrow \infty]>0.$$
For any $\varepsilon\in(0,1/2)$, if $f(\varepsilon)\geq 1/(2C),$ where $C$ is the constant from Proposition \ref{prop:ODE}, then the claim naively follows; otherwise, one can find $N_0=N_0(\varepsilon) \geq 3$ such that $f_n(\varepsilon)<1/(2C)$ for all $n\geq N_0$. By Proposition \ref{prop:ODE} and the monotonicity of $f_n(\cdot)$, uniformly for all $\delta\in(0,\varepsilon)$ and $n>N_0$, $f'_{n,+}(\delta)\geq c(1-Cf_n(\varepsilon))>c/2$. From this we conclude that $f_n(\varepsilon)\geq c\varepsilon/2$ for any $n\geq N_0 $. Combining two cases, one has $f(\varepsilon)>0$ for any $\varepsilon>0$. 
\end{proof}
\begin{rem}\label{rem:gen} 1) Our argument does not essentially rely on the quasi-transitivity of ${\cal G}$ except at places where we require a uniform constant depending on the graphs ${\cal G}$.  In fact, it applies to all transient weighted metric graphs with uniformly bounded degrees and edge weights uniformly bounded from above (but not necessarily from below -- since one may always break a ``long'' edge into shorter pieces, which produces a new skeleton graph but keep the metric space (of the original metric graph ${\cal G}$) and $\alpha_c$ unchanged; note, however, that in this case we can only establish the existence of an unbounded cluster with respect to the underlying metric of ${\cal G}$, not the graph distance on ${\cal G}^{\rm skeleton}$). \newline
2) Another direction of generalization is to take killing into consideration. This corresponds to massive loop soups or loop soups on a metric graph with boundary). If the killing is mild, i.e.,
\begin{equation}\label{eq:mild}
h_{\rm kill}(x) :=Q_x\big[ (B^{\cal G}_t)_{t\geq 0}\mbox{ is killed }\big]<1,\;\forall x\in G,
\end{equation}
(with slight abuse of notation we still denote by $(B^{\cal G}_t)_{t\geq 0}$ and $Q_x$ the massive Brownian motion and the respective law) then our argument still works with little modification (in this case the sum in \eqref{eq:lb} is bounded below by $1-h_{\rm kill}(x_o)$ instead). This can be compared with a similar conclusion in the case of metric graph GFF; see (1.8) of \cite{DPR22}. When the killing is strong (i.e., $h_{\rm kill}(\cdot)\equiv1$), our argument fails and hence it is very natural to wonder if $\alpha_c=1/2$ still holds for various graphs that fall in this category. Take $\mathbb{Z}^2$ with constant killing for instance. In Theorem 5.2.11 of \cite{camia2016scaling}, it is established that the critical intensity of the massive planar Brownian loop soup is still $1/2$ (in our notation). But one cannot adapt the arguments of \cite{Lupu16-2} in this case because the massive Brownian loop soup only corresponds to discrete (including metric graph) loop soup with diminishing mass in the scaling limit.

However, it is worth pointing out that most likely \eqref{eq:mild} is not a necessary condition for our result to hold. We now provide a example. For the metric graph $\mathbb{Z}^{d}\times\mathbb{N}$, $d\geq 2$ with uniform edge weights and instantaneous killing on the boundary $\mathbb{Z}^{d}\times\{0\}$, it is very plausible that one can show $\alpha_c=1/2$ by imitating the proof for Lemma 1.8 of \cite{Chang17}, which adapts \cite{GrimMas} to establish that the percolation threshold in a slab approximates the full space threshold for random walk loop soups. 

There are also graphs that do not satisfy the (Cap) condition \eqref{eq:cap} for which percolation already occurs at $\alpha=1/2$ (see Section 7 of \cite{Prevost2023} for such an example). It is then very natural to wonder if $\alpha_c<1/2$ in this case.

With all these discussions in mind, let us end this note by asking the following questions: what is the sufficient and necessary condition one should pose on a metric graph for  $\alpha_c$ to be exactly $1/2$? Is the same condition also the sufficient and necessary for the critical threshold of level-set percolation for metric graph GFF to be $0$? 
\end{rem}

\begin{acks}[Acknowledgments]
We thank two anonymous referees for their helpful and detailed comments on an earlier version of the manuscript. XL also wishes to thank Pierre-Francois Rodriguez, Alex Drewitz and Zhenhao Cai for various helpful discussions. 
\end{acks}

\begin{funding}
The authors  acknowledge the support of National Key R\&D Program of China (No.\ 2021YFA1002700 and No.\ 2020YFA0712900). YC acknowledges the support of NSFC (No.\ 11701395). HD is partially supported by the elite undergraduate training program of School of Mathematical Science at Peking University. XL acknowledges the support of NSFC (No.\ 12071012). 
\end{funding}

\bibliography{percolation}
\bibliographystyle{imsart-nameyear}

\end{document}